\DeclareMathOperator{\SO}{SO}
\DeclareMathOperator{\U}{U}
\newcommand{\R}{\mathbb R}
\newcommand{\Z}{\mathbb Z}
\newcommand{\so}{\mathfrak{so}}
\renewcommand{\geq}{\geqslant}
\theoremstyle{plain}
	\newtheorem{theorem}{Theorem}
\theoremstyle{definition}
\theoremstyle{plain}
	\newtheorem*{theorem*}{Theorem}
	\newtheorem*{proposition*}{Proposition}
	\newtheorem*{lemma*}{Lemma}
	\newtheorem*{corollary*}{Corollary}
	\newtheorem*{conjecture*}{Conjecture}
\theoremstyle{definition}
	\newtheorem*{definition*}{Definition}
	\newtheorem*{remark*}{Remark}
	\newtheorem*{remarks*}{Remarks}
\begin{document}
	
\title{Symplectic domination}
\author{Joel Fine\footnote{D\'epartement de math\'ematiques, Universit\'e libre de Bruxelles, Belgium. \texttt{\href{mailto:joel.fine@ulb.ac.be}{joel.fine@ulb.ac.be}
}}~~and Dmitri Panov\footnote{Department of Mathematics, King's College London, United Kingdom. \texttt{\href{mailto:dmitri.panov@kcl.ac.uk}{dmitri.panov@kcl.ac.uk}}}}

\date{ }

\maketitle

The aim of this short note is to prove the following theorem.

\begin{theorem}\label{symplectic-domination}
	Let $M$ be a compact oriented manifold of even dimension. There exists a map of positive degree $f \colon S \to M$ from a compact symplectic manifold $S$ of the same dimension. 
\end{theorem}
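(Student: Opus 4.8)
The plan is to realise $M$, up to composition with positive-degree maps, as a branched covering of a closed symplectic manifold along a \emph{symplectic} branch divisor. This suffices: if $p\colon S\to B$ is a branched covering of a symplectic manifold $(B,\omega_B)$ along a symplectic hypersurface $\Delta$, then $S$ is symplectic. Indeed $p^{*}\omega_B$ is symplectic away from the ramification locus $R=p^{-1}(\Delta)$ and degenerates along $R$ only in the normal directions; since $\Delta$, hence $R$, is symplectic, one may add to $p^{*}\omega_B$ a small closed $2$-form supported near $R$ that is positive on the fibres of the normal bundle of $R$, and the sum is symplectic. Given such a presentation of $M$ one takes $S$ to be the cover and $f$ the composite of the intervening positive-degree maps.

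\emph{Step 1 --- a branched covering of a K\"ahler base.} By Alexander's theorem $M$ is a simplicial branched covering of $S^{2n}$. Upgrading the base to a closed K\"ahler manifold is the role of the refinements of this theorem: in dimension four every closed oriented manifold is a branched cover of $\mathbb{CP}^{2}$ along an embedded surface (Piergallini-type results), and I would use the analogue in higher dimensions --- that $M$ admits, after replacing it by a finite branched cover $M_{1}\to M$ if necessary, a branched covering $\pi_{1}\colon M_{1}\to B$ onto a closed K\"ahler manifold $B$ with branch locus an embedded codimension-two submanifold $\Delta\subset B$.

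\emph{Step 2 --- making the branch locus symplectic.} This is where the real work lies. The submanifold $\Delta$ need not be symplectic, and it cannot be made so with $M_{1}$ held fixed, since $M_{1}$ itself need not be symplectic; the point is to modify $M_{1}$ only by positive-degree maps while improving $\Delta$ inside $B$. The permitted moves are cyclic branched covers of $M_{1}$, branched covers pulled back along auxiliary maps to $B$, and local modifications of the branch data --- each of which is a positive-degree map on the covers; correspondingly one may enlarge the branch locus by symplectic hypersurfaces of $B$ (produced in the classes $k[\omega_B]$, $k\gg0$, by Donaldson's theorem), isotope it, perform normal-connected sums along it, and adjust the covering monodromy, so as to bring $\Delta$ into a symplectic position compatible with the constraints (homology class, normal data) that symplectic submanifolds of $B$ must satisfy. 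I expect this step --- controlling the isotopy class and homology of the branch locus tightly enough to make it symplectic, while only paying the price of positive-degree modifications of $M$ --- to be the crux of the whole argument; Steps $1$ and $3$ are soft.

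\emph{Step 3 --- conclusion.} Once $M_{1}$ has been replaced by a manifold $M'$ with a branched covering onto $B$ whose branch locus is a symplectic hypersurface, the correction described above makes $M'$ symplectic, and $M'$ together with the composite map to $M$ is the desired positive-degree map from a compact symplectic manifold. (A parallel line of attack would start instead from the existence of a folded symplectic structure on $M$ --- classical for surfaces and $4$-manifolds after Cannas da Silva --- and try to resolve the fold by gluing the two sides: one side is symplectic outright, the other after an orientation reversal and Gromov's $h$-principle for open manifolds. There the analogous difficulty is to match the two symplectic structures across the fold, which amounts to finding a symplectic cobordism between the induced hypersurface structures.)
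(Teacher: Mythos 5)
Your reduction in the preamble (a branched cover of a symplectic manifold along a symplectic branch divisor is again symplectic, by adding a small form supported near the ramification locus to the pullback) is sound, but the proof built on it has two genuine gaps. In Step 1, for dimensions $2n\geq 6$ there is no analogue of the Piergallini-type results you invoke: Alexander's theorem exhibits $M$ as a piecewise-linear branched cover of $S^{2n}$, but the branch set is only a codimension-two subcomplex, in general not an embedded (or even locally flat) submanifold, and the upgrade to ``a branched covering $\pi_1\colon M_1\to B$ onto a closed K\"ahler manifold with embedded codimension-two branch locus'' is itself an open problem at least as hard as anything you are allowed to assume. Step 2 is not an argument but a wish list: you enumerate permitted moves (adding Donaldson hypersurfaces to the branch locus, changing monodromy, isotopies) and state the goal, but give no mechanism by which these moves make the original component $\Delta$ --- whose homology class and embedding you do not control --- symplectic, nor why the modified cover $M'$ would still admit a positive-degree map to $M$: changing the branch data changes the total space, and a cover branched over an enlarged locus maps to $B$, not to $M_1$. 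You correctly flag this step as the crux; as written it is unproved, so the proposal is a strategy rather than a proof.

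For comparison, the paper's argument avoids branched coverings entirely and gets its symplectic geometry from curvature. Ontaneda's theorem replaces $M$, via a degree-one map, by a compact oriented Riemannian manifold $N$ with sectional curvatures pinched in $[-1-\epsilon,-1]$; Reznikov's theorem then provides an integral symplectic form $\omega$ on the twistor space $Z\to N$ which is symplectic on the fibres; finally Donaldson's hypersurface theorem is applied iteratively $d=n(n-1)/2$ times inside $Z$ to produce a symplectic submanifold $S_d$ of dimension $2n$ with $[S_d]$ Poincar\'e dual to $k[\omega]^d$, and the twistor projection $S_d\to N$ has positive degree because $[S_d]\cdot[F]=k\int_F\omega^d>0$. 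So Donaldson's theorem does appear, but as a tool for cutting a symplectic ambient space down to the right dimension, not for adjusting a branch locus; what your outline is missing is precisely the input that Ontaneda plus Reznikov supply, namely a compact integral symplectic manifold that dominates $M$ to begin with.
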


This result says, in some sense, that there are ``a lot'' of symplectic manifolds. This fits with the philosophy behind a folklore conjecture in symplectic topology, stated as Conjecture~6.1 in the article \cite{eliashberg} of Eliashberg. The conjecture asserts that if $X$ is a compact manifold of dimension $2n \geq 6$, which admits an almost complex structure and a cohomology class $\kappa \in H^2(X,\R)$ with $\kappa^n \neq 0$, then $M$ carries a symplectic structure.

Theorem~\ref{symplectic-domination} follows rather quickly from two deep results (stated as Theorems~\ref{ontaneda} and~\ref{skd} below). The first is a spectacular construction by Ontaneda of Riemannian manifolds with tightly pinched negative curvatures. 
\begin{theorem}[Ontaneda]\label{ontaneda}
	Let $M$ be a compact oriented manifold and $\epsilon >0$. There exists a degree~one map $f \colon N \to M$ from a compact oriented Riemannian manifold $N$ of the same dimension, with sectional curvatures in the interval $[-1-\epsilon, -1]$.
\end{theorem}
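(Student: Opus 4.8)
The plan is to combine the strict hyperbolization procedure of Charney and Davis, which produces a piecewise-hyperbolic manifold dominating $M$, with a Riemannian smoothing that turns the resulting singular metric into a genuine smooth one whose sectional curvatures are $\epsilon$-pinched; this second step is Ontaneda's main contribution. First I would replace $M$ by a combinatorial model: fix a smooth triangulation of $M$ (Whitehead) and pass to the derived cubical subdivision, obtaining a cubical complex $K$ homeomorphic to $M$ in which the link of every cell is a sphere. The geometric input to the hyperbolization is a single \emph{hyperbolized cube}: a compact hyperbolic $n$-manifold with corners $\mathfrak h^n$, combinatorially a cube, whose codimension-one faces are totally geodesic, meet along lower faces at right dihedral angles, and are themselves isometric to $\mathfrak h^{n-1}$. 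Such a piece is built from a closed hyperbolic manifold via Davis's reflection-group trick. Replacing every $n$-cube of $K$ by a copy of $\mathfrak h^n$ and gluing hyperbolized faces by the isometries dictated by the combinatorics of $K$ yields a closed oriented manifold $N_0=h(K)$ carrying a piecewise-hyperbolic metric, together with a collapse map $f\colon N_0\to M$ that sends each $\mathfrak h^n$ to its cube. Over the interior of a top-dimensional cube this collapse is an orientation-preserving homeomorphism onto a neighbourhood, so $f$ has degree one; this settles the topological half of the statement.

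The difficulty is that the metric on $N_0$ is only piecewise-hyperbolic. It has constant curvature $-1$ on the complement of the codimension-$\geq 2$ skeleton, but along each singular stratum there is angle excess: around a codimension-two stratum the total cone angle exceeds $2\pi$, with analogous excess in higher codimension. This metric is CAT($-1$), which is what Charney and Davis verify by checking that all links are CAT($1$); but CAT($-1$) is a synthetic condition and does not by itself supply a smooth metric with two-sided sectional-curvature bounds. The curvature that CAT($-1$) records as concentrated, distributionally negative mass along the singular strata must therefore be spread out smoothly.

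Here is the hard part, the Riemannian smoothing. I would work by induction on the codimension of the strata. In a tubular neighbourhood of a stratum $\Sigma$ the singular model is a warped product in Fermi coordinates, schematically $\dd r^2 + f(r)^2\,g_{\perp} + \cosh^2(r)\,g_{\Sigma}$, where the radial profile $f(r)=c\sinh r$ with $c>1$ encodes the cone of excess angle. The idea is to replace $f$ near $r=0$ by a smooth profile $f_\delta$ that removes the cone, with $f_\delta(0)=0$ and $f_\delta'(0)=1$, and which agrees with $c\sinh r$ for $r\geq \delta$, and to make the corresponding modification in the transverse normal directions. Using the sectional-curvature formulas for such multiply-warped metrics, in which the radial curvatures are governed by $-f''/f$ and the mixed curvatures by the second fundamental forms of the level hypersurfaces, one chooses the profiles so that every sectional curvature falls in $[-1-\epsilon,-1]$. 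Because each modification is supported in a shrinking neighbourhood of the singular locus and leaves the hyperbolic metric untouched elsewhere, carrying this out coherently stratum by stratum produces a global smooth Riemannian metric on $N_0$ with the required pinching.

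The main obstacle is precisely this last step. The interpolation must simultaneously respect the \emph{upper} bound (curvature $\leq -1$, needed to absorb the positive angle excess) and the \emph{lower} bound (curvature $\geq -1-\epsilon$, needed for the pinching), and these two requirements compete: rounding off a cone of large angle tends to create transverse directions of small, or even positive, curvature. Controlling the second fundamental forms of the strata through the choice of warping, and propagating the estimates consistently up the codimension filtration so that the smoothings performed in different codimensions remain mutually compatible, is the delicate technical heart of Ontaneda's argument, and it is there that the full force of the construction is needed to push $\epsilon$ to $0$.
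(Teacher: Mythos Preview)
The paper does not supply its own proof of this statement: Theorem~\ref{ontaneda} is quoted as Ontaneda's result, with only the one-line gloss that ``the pinched manifolds constructed by Ontaneda are smoothings of singular negatively curved manifolds constructed by Charney and Davis using a procedure called strict hyperbolisation.'' Your proposal is entirely consistent with that description and in fact fleshes it out considerably---Charney--Davis hyperbolized cube complex for the topological degree-one domination, then Ontaneda's warped-product smoothing stratum by stratum for the $\epsilon$-pinched metric---so there is no divergence between your approach and the paper's (non-existent) proof to report.

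One caution worth flagging, since you explicitly identify the smoothing as the hard step: your sketch correctly locates the difficulty in simultaneously enforcing the upper and lower curvature bounds while rounding cones, but it understates one genuine obstruction that Ontaneda has to overcome. To obtain arbitrarily small $\epsilon$, the hyperbolized cube $\mathfrak h^n$ itself must be chosen with sufficiently large \emph{normal injectivity radius} along its faces; otherwise there is not enough room in the tubular neighbourhoods to interpolate with curvature close to $-1$. Producing such hyperbolized pieces, and ensuring the smoothings in different codimensions fit together, is where the bulk of the series \cite{ontaneda2,ontaneda3,ontaneda4,ontaneda5,ontaneda6,ontaneda7,ontaneda8} is spent, so your final paragraph is right that this is where the real work lies.
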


This is the main result of a lengthy preprint \cite{ontaneda1}, which was subsequently broken up into a series of articles for publication \cite{ontaneda2,ontaneda3,ontaneda4,ontaneda5,ontaneda6,ontaneda7,ontaneda8}. The pinched manifolds constructed by Ontaneda are smoothings of singular negatively curved manifolds constructed by Charney and Davis using a procedure called strict hyperbolisation \cite{CH}. This in turn builds on the hyperbolisation of polyhedra by Gromov \cite{Gromov}.

For our purposes, the important consequence of the curvature pinching is that the twistor space of $N$ carries a natural symplectic form.  We recall that the twistor space $Z \to N$ of an oriented Riemannian manifold $N$ is the bundle of compatible almost complex structures on the tangent spaces. I.e.~the fibre of $Z$ over $x \in N$ is the set of all linear orthogonal complex structures on $T_xN$ which induce the given orientation. The fibres are homogeneous spaces, identified with $F = \SO(2n)/\U(n)$. The symplectic form on $Z$ is provided by a construction due to Reznikov (which is, in fact, a special case of Weinstein's ``fat bundles'' \cite{weinstein}).

\begin{theorem}[Reznikov \cite{reznikov}]
Let $N$ be an oriented even-dimensional Riemannian manifold with twistor space $Z$. There is a natural closed 2-form $\omega$ on $Z$ with integral cohomology class $[\omega] \in H^2(Z,\Z)$ which is symplectic when restricted to each fibre of $Z \to N$. Moreover, there is a positive number $\epsilon >0$, depending only on the dimension of $N$, such that if the sectional curvatures of $N$ lie in the interval $[-1-\epsilon, -1]$ then $\omega$ is symplectic.\end{theorem}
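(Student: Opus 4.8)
Write $\dim N = 2n$ and let $P \to N$ be the bundle of oriented orthonormal frames, a principal $\SO(2n)$-bundle carrying the Levi-Civita connection as a $1$-form $\theta$ with values in $\so(2n)$ and curvature $\Omega$. Fix an $\mathrm{Ad}$-invariant inner product on $\so(2n)$, say $\langle A,B\rangle=-\tfrac12\tr(AB)$, and use it to identify $\so(2n)\cong\so(2n)^{*}$. Under this identification the fibre $F=\SO(2n)/\U(n)$ becomes the adjoint orbit $\{J\in\so(2n):J^{2}=-\Id\}$ of linear complex structures compatible with the metric and orientation; it is a compact Hermitian symmetric space with Kirillov--Kostant--Souriau symplectic form $\sigma$, and the $\SO(2n)$-action on $(F,\sigma)$ is Hamiltonian with moment map the inclusion $\Phi\colon F\hookrightarrow\so(2n)^{*}$, $\Phi(J)=J$. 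Recall that $Z=P\times_{\SO(2n)}F=P/\U(n)$.

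First I would produce $\omega$ by the Sternberg--Weinstein minimal coupling (``fat bundle'') construction. On $P\times F$ set $\alpha=\langle\Phi,\theta\rangle$, the $1$-form pairing the $\so(2n)^{*}$-valued function $\Phi$ with the $\so(2n)$-valued form $\theta$, and put $\tilde\omega=\pi_{F}^{*}\sigma+\dd\alpha$. This is closed because $\sigma$ is. For the diagonal $\SO(2n)$-action it is invariant and horizontal: for $\xi\in\so(2n)$ the contribution of $\pi_{F}^{*}\sigma$ to $\iota_{\xi}\tilde\omega$ is $\dd\langle\Phi,\xi\rangle$ by the moment map identity, while that of $\dd\alpha$ is $-\dd\langle\Phi,\xi\rangle$ since $\theta$ sends the fundamental field of $\xi$ to $\xi$; the two cancel. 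Hence $\tilde\omega$ descends to a closed $2$-form $\omega$ on $Z$, evidently natural in $(N,g)$. Integrality comes from the same data: the $\mathrm{Ad}(\U(n))$-invariant splitting $\so(2n)=\u(n)\oplus\mathfrak m$ lets one extract from $\theta$ a genuine connection on the principal $\U(n)$-bundle $P\to Z=P/\U(n)$, and a computation identifies $\omega$, up to the fixed scalar, with the curvature of the induced connection on the line bundle $\mathcal L=P\times_{\U(n),\det}\C$; normalising the inner product so that this scalar is $1$ gives $[\omega]\in H^{2}(Z;\Z)$.

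Next I would read off $\omega$ in the horizontal--vertical splitting $TZ=\mathcal H\oplus\mathcal V$ attached to the connection, where $\mathcal H\cong\pi^{*}TN$ and, at a point $z\in Z$ over $x$ corresponding to a complex structure $J$ on $T_{x}N$, $\mathcal V_{z}\cong T_{J}F$. Evaluating $\tilde\omega$ on horizontal lifts and on vertical vectors, and using that $\alpha$ kills both kinds of vector, one finds at $z=(x,J)$ that $\omega$ is block diagonal: it restricts to $\sigma$ on $\mathcal V_{z}$, which is nondegenerate and yields the fibrewise statement; the mixed block $\omega(\mathcal H_{z},\mathcal V_{z})$ vanishes; and on $\mathcal H_{z}\cong T_{x}N$ it is the bilinear form $(u,v)\mapsto\langle J,R(u,v)\rangle$, the pairing of $J$ with the Riemann curvature endomorphism $R(u,v)\in\so(T_{x}N)$ (again up to the fixed scalar). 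So $\omega$ is symplectic exactly when this horizontal block is nondegenerate at every point.

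Finally, the pinching. A direct computation shows that for the model $R_{0}$ of constant sectional curvature $-1$ the bilinear form $(u,v)\mapsto\langle J,R_{0}(u,v)\rangle$ is, up to sign, the fundamental $2$-form $g(J\cdot,\cdot)$ of the almost-Hermitian structure $(g,J)$, which is nondegenerate on the $2n$-dimensional space $T_{x}N$ for every compatible $J$ and in every dimension. Hence $R_{0}$ lies in the set $\mathcal G$ of algebraic curvature tensors $R$ on $\R^{2n}$ for which $(u,v)\mapsto\langle J,R(u,v)\rangle$ is nondegenerate for all $J\in F$; since $F$ is compact this $\mathcal G$ is open in the finite-dimensional space of algebraic curvature tensors, so it contains a ball $B(R_{0},\delta)$ with $\delta=\delta(n)>0$. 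If the sectional curvatures at $x$ lie in $[-1-\epsilon,-1]$, then $R_{x}-R_{0}$ is an algebraic curvature tensor all of whose sectional curvatures lie in $[-\epsilon,0]$, and the classical polarisation estimate---the quadratic form of sectional curvatures controls the whole tensor, via the first Bianchi identity---gives $\|R_{x}-R_{0}\|\leq C(n)\epsilon$. Choosing $\epsilon<\delta(n)/C(n)$, a number depending only on $\dim N$, forces $R_{x}\in\mathcal G$ for every $x$, so $\omega|_{\mathcal H}$ is nondegenerate everywhere; with nondegeneracy of $\sigma$ on $\mathcal V$ and the vanishing of the mixed block, $\omega$ is nondegenerate and, being closed, symplectic. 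I expect this last paragraph to be the main obstacle: the model computation is short, but one must make the passage from sectional-curvature pinching to smallness of the full curvature tensor---and hence to remaining inside the open set $\mathcal G$---uniform over $N$ and dependent on the dimension alone; the descent and the block-diagonal computation, while the technical backbone, are routine.
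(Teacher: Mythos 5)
Your proposal is correct and follows essentially the same route as the paper: the minimal-coupling/fat-bundle form you write down is exactly the construction the paper describes (it explicitly notes Reznikov's form is a special case of Weinstein's fat bundles), your determinant line bundle $P\times_{\U(n),\det}\C$ is the paper's circle bundle $\SO(2n)\times_{\U(n)}S^1$ assembled over $Z$ via the Levi-Civita connection, and your pinching argument is the paper's ``nondegenerate for the hyperbolic model, hence by openness for sufficiently pinched metrics,'' merely made quantitative through the standard estimate bounding the full curvature tensor by its sectional curvatures. No gaps; you have simply fleshed out the openness step that the paper leaves as an observation.
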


Since this is central to the proof of Theorem~\ref{symplectic-domination}, we explain briefly how the construction goes. The key to the existence of an integral closed 2-form is that the model fibre $F = \SO(2n)/\U(n)$ of twistor space is a homogeneous integral symplectic manifold. In other words, there is a principle $S^1$-bundle $P_F \to F$ with a connection $A_F$ whose curvature is a symplectic form on $F$; moreover $P_F$ carries an action of $\SO(2n)$ covering the action on $F$ and leaving $A_F$  invariant. 

This can be seen via the theory of integral coadjoint orbits (see, for example, \cite{kirillov}), but it is also simple to describe it explicitly. Consider the action of $G \in \U(2n)$ on $\SO(2n)\times S^1$ by
\[
G \cdot (R, e^{i\theta}) = (RG^{-1}, \det(G) e^{i\theta})
\]
We denote the quotient by $P_F=\SO(2n)\times_{\U(n)} S^1$. The $S^1$-action on $\SO(2n)\times S^1$ given by multiplication on the second factor commutes with the diagonal action of $\U(n)$ and so descends to $P_F$ making it a principle $S^1$-bundle over $F$. Moreover, the $\SO(2n)$-action on $\SO(2n)\times S^1$ given by multiplication on the left of the first factor also commutes with the diagonal action of $\U(n)$, and so descends to an $\SO(2n)$-action on $P_F$, where it covers the $\SO(2n)$-action on $F$. Finally, to see the connection consider the derivative $\so(2n) \to T_pP_F$ of the $\SO(2n)$-action at a point $p \in P_F$. It is  transverse to the $S^1$-orbit through $p$, the image gives the horizontal distribution defining the $\SO(2n)$-invariant connection $A_F$.

We now return to the twistor space $Z \to N$ and carry out this construction on every fibre. The result is a principle $S^1$-bundle $P \to Z$ fitting together the fibrewise bundles $P_F \to F$. Moreover, the connection $A_F$ gives a fibrewise connection in $Z$. To promote this to a genuine connection in all of $P \to Z$ we must specify the horizontal distribution transverse to the fibres of $Z \to N$; but this is precisely what the Levi-Civita connection does. This gives a connection $A$ in $P\to Z$ whose curvature determines a closed integral 2-form $\omega$ which is symplectic on each fibre. 

One can now ask for $\omega$ to be symplectic, which becomes a curvature inequality for the Riemannian metric on $N$. Reznikov observed that this inequality is satisfied by hyperbolic space and so, by openness, it is also satisfied by all negatively curved metrics which are sufficiently pinched. In the case $\dim N =4$, the article \cite{fine-panov} gives the full curvature inequality explicitly.

The next step in the proof is to invoke another deep theorem, namely Donaldson's result on symplectic hypersurfaces. 

\begin{theorem}[Donaldson \cite{donaldson}]\label{skd}
	Let $(Z,\omega)$ be a compact symplectic manifold with $[\omega]$ an integral cohomology class. There exists a symplectic submanifold $S$ of codimension~2, with $[S]$ Poincaré dual to a positive multiple $k[\omega]$ of the symplectic class.
\end{theorem}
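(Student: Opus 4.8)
The plan is to follow Donaldson's method of \emph{asymptotically holomorphic sections}. First I would choose an almost complex structure $J$ compatible with $\omega$, so that $g(\cdot,\cdot) = \omega(\cdot, J\cdot)$ is a Riemannian metric and $(Z,\omega,J)$ is almost-Kähler. Since $[\omega]$ is integral, there is a Hermitian line bundle $L \to Z$ carrying a unitary connection $\nabla$ whose curvature is $-2\pi i \omega$, so that $c_1(L) = [\omega]$. The hypersurface $S$ will be produced as the zero locus of a well-chosen section $s_k$ of a high tensor power $L^{\otimes k}$; since $c_1(L^{\otimes k}) = k[\omega]$, a transverse zero locus is automatically Poincaré dual to $k[\omega]$, and the integer $k$ will be the multiple in the statement.

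The guiding heuristic is that the natural length scale of sections of $L^{\otimes k}$ shrinks like $k^{-1/2}$: after rescaling the metric to $kg$, the local geometry of $(Z, kg, L^{\otimes k})$ converges to the flat prequantum model on $(\C^n, \omega_{\mathrm{st}})$, on which genuinely holomorphic Gaussian peak sections exist. I would therefore call a sequence $s_k$ of sections \emph{asymptotically holomorphic} if, in the rescaled metric, it satisfies uniform bounds $|s_k|, |\nabla s_k|, |\nabla^2 s_k| \leq C$ while its antiholomorphic part is small, $|\delb s_k|, |\nabla \delb s_k| \leq C k^{-1/2}$. The first key point is that uniform transversality controls the zero set: if such a sequence is moreover \emph{uniformly transverse to zero}, meaning there is an $\eta > 0$ independent of $k$ with $|\del s_k| \geq \eta$ wherever $|s_k| < \eta$, then for $k$ large the zero set $S = s_k^{-1}(0)$ is a smooth codimension-two submanifold which is symplectic. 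Smoothness is the implicit function theorem applied to the quantitative transversality; the symplectic property holds because asymptotic holomorphy forces the tangent space $\ker \nabla s_k$ to lie within $O(k^{-1/2})$ of the complex subspace $\ker \del s_k$, on which $g = \omega(\cdot, J\cdot)$ is positive and hence $\omega$ is non-degenerate, so $\omega|_{TS}$ stays non-degenerate once $k$ dominates the error.

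It then remains to construct uniformly transverse asymptotically holomorphic sections, and I would do this in two stages. Locally, near any point $x$, a trivialisation of $L^{\otimes k}$ together with the rescaled coordinate produces a supply of concentrated model sections decaying like $e^{-|z|^2/4}$; these are asymptotically holomorphic, have modulus bounded below on a ball of fixed rescaled radius, and serve as the elementary perturbations. Globally, I would cover $Z$ by balls of radius comparable to $k^{-1/2}$ and argue inductively, adding small multiples of these localised sections to secure a definite transversality bound over one ball at a time while only slightly damaging the transversality already achieved elsewhere, the Gaussian decay of the model sections keeping the interference between well-separated balls under control.

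The heart of the matter, and the step I expect to be the main obstacle, is making this globalisation quantitative, so that the transversality constant $\eta$ does not degenerate as $k \to \infty$ even though the number of balls grows like $k^n$. This is Donaldson's central effective-transversality estimate: given an asymptotically holomorphic function on a ball that is $\eta$-transverse over a slightly larger ball, one can find a perturbation of controlled $C^0$-size $\delta$ after which it is $\eta'$-transverse, with $\eta'$ smaller than $\delta$ by only a factor polynomial in $\log(1/\delta)$. Its proof rests on a quantitative Sard-type theorem bounding the measure of near-critical values of the polynomial approximations to $s_k$. Organising the $\sim k^n$ balls, via a colouring, into a number of rounds of simultaneous perturbations over pairwise-separated balls that depends only on the dimension, the cumulative loss across the rounds remains bounded below uniformly in $k$, and feeding the resulting section $s_k$ into the transversality observation yields the required symplectic hypersurface $S$.
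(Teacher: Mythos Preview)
The paper does not prove this theorem at all: it is quoted from \cite{donaldson} and used as a black box in the proof of Theorem~\ref{symplectic-domination}. Your outline is a faithful and accurate summary of Donaldson's original asymptotically-holomorphic-sections argument, so in that sense it agrees with, and indeed goes well beyond, what the paper itself provides.
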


\begin{proof}[Proof of Theorem~\ref{symplectic-domination}]

By Ontaneda's Theorem it suffices to prove the result for all compact oriented even-dimensional Riemannian manifolds $N$ with sectional curvatures pinched arbitrarily close to $-1$. 

Suppose first that $\dim N=4$. In this case, the twistor space $Z \to N$ has fibres $S^2$. By Reznikov's result we know that there is an integral symplectic form on $Z$ for which the twistor fibres are symplectic. Now let $S\subset Z$ be a Donaldson hypersurface, with $[S] = k[\omega]$ for $k>0$. The twistor projection restricts to a smooth map $f \colon S \to N$ and we claim the degree of this map is positive. To prove this write $[F]$ for the homology class of a fibre of $Z \to N$. The intersection number $[S]\cdot[F] = k \int_F\omega$ is positive since it is a positive multiple of the symplectic area of $F$. It follows that $f$ is surjective. Now Sard's theorem implies the existence of a point $x \in N$ which is not a critical value of $f$. This means that $S$ meets the fibre $F_x$ over $x$ transversely. The local degree of $f$ at each point of $F_x \cap S$ is equal to the local intersection of $F_x$ and $S$ at that point, hence the degree of $f$ equals $[S]\cdot [F]$ which we have just seen is positive.

In higher dimensions the argument is similar. When $\dim N = 2n$ the twistor space has dimension $n(n+1)$ and the fibre has dimension $n(n-1)$. We start as before with a Donaldson hypersurface $S_1 \subset Z$, with $[S_1]$ Poincar\'e dual to $k_1[\omega]$. We apply Donaldson's theorem again, this time to $(S_1, \omega_{S_1})$, to obtain a symplectic submanifold $S_2 \subset S_1 \subset Z$, where $S_2$ has codimension~4 in $Z$ with $[S_2]$ Poincaré dual to $k_2[\omega]^2$. We continue in this way, producing a chain $S_d \subset S_{d-1} \subset \cdots \subset S_1 \subset Z$ of symplectic submanifolds where $d = n(n-1)/2$. Each $S_j$ is a symplectic submanifold of $Z$ of codimension $2j$ and so $S_d$ has complimentary dimension to a fibre of $Z \to N$. Moreover, $[S_d]$ is Poincaré dual to $k [\omega]^d$ for some $k>0$. It follows that $[S_d]\cdot [F] = 
k \int_F \omega^d$ which is positive since it is a positive multiple of the symplectic volume of the fibre. From here the same argument as before shows that the twistor projection $f \colon S_d \to N$ has positive degree. 
\end{proof}

We close with a remark, that the symplectic manifolds $(S,
\omega)$ produced in the proof of Theorem~\ref{symplectic-domination} are of ``general type'' in the sense that $c_1(S) = - p[\omega]$ where $p>0$. This follows from adjunction and the fact, proved in \cite{fine-panov2}, that when $dim N=2n$, the symplectic structures on the twistor space satisfy $c_1(Z) = (n-2) [\omega]$. 

\subsubsection*{Acknowledgements} We would like to thank Igor Belegradek and Anton Petrunin for discussions. JF was supported by ERC consolidator grant 646649 ``SymplecticEinstein''.

\end{document}